\newtheorem{theorem}{Theorem}[section]
\newtheorem{lemma}[theorem]{Lemma}
\newtheorem{proposition}[theorem]{Proposition}
\newtheorem{definition}[theorem]{Definition}
\newtheorem{remark}[theorem]{Remark}
\numberwithin{equation}{section}
\begin{document}

\title{Generalized Fractional Operators on Time Scales
with Application to Dynamic Equations\thanks{Preprint whose 
final and definite form is with 
\href{https://link.springer.com/journal/11734}{\emph{The European Physical Journal Special Topics}} (EPJ ST), 
ISSN 1951-6355 (Print), ISSN 1951-6401 (Online).\newline
Submitted 20-June-2017; revised 26-Oct-2017; accepted for publication 06-April-2018.}}

\author{Kheira Mekhalfi$^1$\\
{\tt kheira.mekhalfi@yahoo.fr}
\and
Delfim F. M. Torres$^{2,}$\thanks{Corresponding author.}\\
{\tt delfim@ua.pt}}

\date{$^1$Institute of Sciences, Department of Mathematics,\\
University of Ain Temouchent, BP 284,\\
46000 Ain Temouchent, Algeria\\[0.3cm]
$^2$Center for Research and Development in Mathematics
and Applications (CIDMA), Department of Mathematics,\\
University of Aveiro, 3810-193 Aveiro, Portugal}

\maketitle


\begin{abstract}
We introduce more general concepts of Riemann--Liouville
fractional integral and derivative on time scales, of a
function with respect to another function.
Sufficient conditions for existence and uniqueness
of solution to an initial value problem described by
generalized fractional order differential equations
on time scales are proved.

\medskip

\noindent {\bf Keywords}: fractional derivatives and integrals,
initial value problem, dynamic equations, fixed point,
existence and uniqueness of solution.

\medskip

\noindent {\bf MSC 2010}: 26A33, 34K37, 34N05.
\end{abstract}


\section{Introduction}

The theory of fractional differential equations, specifically the
question of existence and uniqueness of solutions, is a research
topic of great importance \cite{ABN,E.Bajlekova,E.Hernandez}.
Another important area of study is dynamic equations on time scales,
which goes back to 1988 and the work of Aulbach and Hilger,
and has been used with success to unify differential
and difference equations \cite{ABRP,MR1062633,AgBoh}.

Starting with a linear dynamic equation, Bastos et al.
have introduced the notion of fractional-order derivative
on time scales, involving time-scale analogues of
Riemann--Liouville operators \cite{BastosPhD,MR2728463,MyID:179}.
Another approach originate from the inverse Laplace
transform on time scales \cite{MR2800417}. After such pioneer work,
the study of fractional calculus on time scales developed
in a popular research subject: see
\cite{MyID:330,MyID:296,MyID:320,MyID:324,MyID:358}
and references therein. Recent results, since 2015,
cover fractional $q$-symmetric
systems on time scales \cite{MR3662497};
existence of solutions for delta Riemann--Liouville
fractional differential equations \cite{MR3537190};
existence and uniqueness of solutions for boundary-value problems
of fractional-order dynamic equations on time scales in Caputo sense
\cite{MR3498152}; existence of solutions for impulsive fractional
dynamic equations with delay on time scales \cite{MR3349274};
and existence of solutions for Cauchy problems with Caputo nabla
fractional derivatives \cite{MR3339669}. As a real application, we mention
the study of calcium ion channels that are retarded with injection
of calcium-chelator Ethylene Glycol Tetraacetic Acid \cite{MR3092034}.
In fact, physical applications
of fractional initial value problems in different
time scales abound \cite{MR1890104,Editorial}.
For example, fractional differential equations that
govern the behaviors of viscoelastic materials with memory
and the creep phenomenon have been proposed in \cite{MyID:350}
for the continuous time scale $\mathbb{T} = \mathbb{R}$;
fractional difference equations in discrete time scales
$\mathbb{T} = h\mathbb{Z}$, $h > 0$, or $\mathbb{T} = q^\mathbb{Z}$,
with relevance in the description of several physical phenomena,
are studied in \cite{MR2820857,MR3290224};
nonlocal thermally sensitive resistors on arbitrary time scales
$\mathbb{T}$ are investigated in \cite{MyID:365}.
Here, we extend available results in the literature
by introducing more general concepts of
fractional operators on time scales of a
function with respect to another function.
Then, we investigate corresponding generalized
fractional order dynamic equations on time scales.

The paper is organized as follows.
In Section~\ref{sec2}, we briefly recall
necessary definitions and results.
Our own results are then given in Section~\ref{sec3}:
we define a fractional integral operator
of Riemann--Liouville type on time scales
(Definition~\ref{def:GFI})
and a generalized fractional derivative
(Definition~\ref{def:GFD}).
Such generalizations on time scales
help us to study relations between
fractional difference equations and fractional differential equations.
On the other hand, they also provide a background to study boundary
value problems including fractional order difference and
differential equations. In this direction, we generalize
the recent results of \cite{MyID:328} by proving some
sufficient conditions for uniqueness and existence
of solutions of a fractional initial value problem
on an arbitrary time scale (see Theorems~\ref{th2.3} and \ref{2ndMR}).
We end with Section~\ref{sec4} of illustrative examples.


\section{Preliminaries}
\label{sec2}

In this section, we recall some definitions and results that are
used in the sequel. We use $\mathcal{C}(\mathcal{J},\mathbb{R})$
to denote the Banach space of continuous functions with the norm
$$
\|y\|_{\infty} = \sup\left\{|y(t)|: t\in \mathcal{J}\right\},
$$
where $\mathcal{J}$ is an interval.

A time scale $\mathbb{T}$ is an arbitrary nonempty closed subset
of the real numbers. The calculus on time scales was initiated
by Aulbach and Hilger \cite{AuHilger,S.Hilger} in order to create
a theory that can unify and extend discrete and continuous analysis.
The real numbers $\mathbb{R}$, the integers $\mathbb{Z}$, the natural
numbers $\mathbb{N}$, the nonnegative integers $\mathbb{N}_{0}$,
the $h$-numbers ($h\mathbb{Z} = \{hk : k \in \mathbb{Z}\}$,
where $h >0$ is a fixed real number), and the $q$-numbers
($q^{\mathbb{Z}} \cup \{0\} = \{q^{k} :k \in \mathbb{Z}\}\cup\{0\}$,
where $q > 1$ is a fixed real number), are examples of time scales,
as are $[0, 1]\cup[2, 3]$, $[0, 1]\cup\mathbb{N}$, and the Cantor set,
where $[0, 1]$ and $[2, 3]$ are intervals of real numbers.
Any time scale $\mathbb{T}$, being a closed subset of the real numbers,
has a topology inherited from the real numbers with the standard topology.
It is a complete metric space with the metric (distance)
$d : \mathbb{T}^2 \rightarrow \mathbb{R}$, $d(t, s) = |t - s|$
for $t, s \in \mathbb{T}$. Consequently, according
to the well-known theory of general metric spaces, we have
for $\mathbb{T}$ the fundamental concepts such as open balls (intervals),
neighborhoods of points, open sets, closed sets, compact sets, and so on.
In particular, for a given number $N > 0$, the $N$-neighborhood
$U_{\delta}(t)$ of a given point $t \in \mathbb{T}$ is the set of all points
$s \in \mathbb{T}$ such that $d(t, s) < N$. By a neighborhood of a point
$t  \in \mathbb{T}$ it is meant an arbitrary set in $\mathbb{T}$ containing
a $N$-neighborhood of the point $t$. Also, we have for functions
$f : \mathbb{T} \rightarrow \mathbb{R}$ the concepts of limit, continuity,
and the properties of continuous functions on general complete metric spaces
(note that, in particular, any function $f :  \mathbb{Z} \rightarrow \mathbb{R}$
is continuous at each point of $\mathbb{Z}$). The main task is to introduce
and investigate the concept of derivative for functions
$f :  \mathbb{T} \rightarrow \mathbb{R}$. This proves to be possible
due to the special structure of the metric space $\mathbb{T}$. In the definition
of derivative, an important role is played by the so-called forward and
backward jump operators.

\begin{definition}[See \cite{AgBoh}]
\label{def:sigma:rho}
For $t \in \mathbb{T}$, one defines the forward jump operator
$\sigma : \mathbb{T} \rightarrow \mathbb{T}$
by
\begin{equation*}
\sigma(t) = \inf\{s \in \mathbb{T} : s > t\},
\end{equation*}
while the backward jump operator
$\rho : \mathbb{T} \rightarrow \mathbb{T}$
is defined by
\begin{equation*}
\rho(t) = \sup\{s \in \mathbb{T} : s < t\}.
\end{equation*}
In addition, we put $\sigma(\max\mathbb{T}) = \max \mathbb{T}$
if there exists a finite $\max\mathbb{T}$,
and $\rho(\min\mathbb{T}) = \min\mathbb{T}$
if there exists a finite $\min\mathbb{T}$.
\end{definition}

Obviously, both $\sigma(t)$ and $\rho(t)$ are in $\mathbb{T}$
when $t \in \mathbb{T}$. This is because of our assumption
that $\mathbb{T}$ is a closed subset of $\mathbb{R}$.
Let $t \in \mathbb{T}$. If $\sigma(t) > t$, then we say that
$t$ is right-scattered, while if $\rho(t) < t$, then we say
that $t$ is left-scattered. Also, if $t < \max \mathbb{T}$
and $\sigma(t) = t$, then $t$ is called right-dense,
and if $t > \min\mathbb{T}$ and $\rho(t) = t$,
then $t$ is called left-dense.

The derivative makes use of the set $\mathbb{T}^{\kappa}$,
which is derived from the time scale $\mathbb{T}$ as follows:
if $\mathbb{T}$ has a left-scattered maximum $M$, then
$\mathbb{T}^{\kappa}:=\mathbb{T} \setminus \{M\}$;
otherwise, $\mathbb{T}^{\kappa}:=\mathbb{T}$.

\begin{definition}[Delta derivative \cite{AB}]
Assume $f:\mathbb{T}\rightarrow \mathbb{R}$ and let
$t\in \mathbb{T}^{\kappa}$. One defines
$$
f^{\Delta}(t):=\lim_{s\rightarrow t}\frac{f(\sigma(s))-f(t)}{\sigma(s)-t},
\quad t \neq \sigma(s),
$$
provided the limit exists. We call $f^{\Delta}(t)$ the delta derivative
(or Hilger derivative) of $f$ at $t$. Moreover, we say that $f$
is delta differentiable on $\mathbb{T}^{\kappa}$ provided
$f^{\Delta}(t)$ exists for all $t\in \mathbb{T}^{\kappa}$. The function
$f^{\Delta}:\mathbb{T}^{\kappa}\rightarrow \mathbb{R}$ is then called
the (delta) derivative of $f$ on $\mathbb{T}^{\kappa}$.
\end{definition}

\begin{definition}[Delta Integral \cite{AB,AgBoh}]
Let $[a,b]$ denote a closed bounded interval in $\mathbb{T}$.
A function $F: [a,b]\rightarrow \mathbb{R}$ is called a delta antiderivative
of function $f: [a,b)\rightarrow \mathbb{R}$ provided $F$ is continuous
on $[a,b]$, delta differentiable on $[a,b)$, and $ F^{\Delta}(t)=f(t)$
for all $t\in [a,b)$. Then, we define the $\Delta$-integral of $f$
from $a$ to $b$ by
\begin{equation*}
 \int_{a}^{b}f(t)\Delta t := F(b)-F(a).
\end{equation*}
\end{definition}

\begin{definition}[See \cite{AB,AgBoh}]
A function $f:\mathbb{T}\rightarrow \mathbb{R}$ is called rd-continuous,
provided it is continuous at right-dense points in $\mathbb{T}$
and its left-sided limits exist (finite) at left-dense points
in $\mathbb{T}$. The set of rd-continuous functions
$f:\mathbb{T}\rightarrow \mathbb{R}$ is denoted by $\mathcal{C}_{rd}$.
Similarly, a function $f:\mathbb{T}\rightarrow \mathbb{R}$
is called ld-continuous provided it is continuous at left-dense points
in $\mathbb{T} $ and its right-sided limits exist (finite)
at right-dense points in $\mathbb{T}$. The set of ld-continuous
functions $f:\mathbb{T}\rightarrow \mathbb{R}$ is denoted by $\mathcal{C}_{ld}$.
\end{definition}

All rd-continuous bounded functions on $[a, b)$ are delta integrable from
$a$ to $b$. For a more general treatment of the delta integral on time scales
(Riemann, Lebesgue, and Riemann-Stieltjes integration on time scales),
see \cite{AgBoh,Guseinov,MyID:137}.

\begin{proposition}[See \cite{AgBoh}]
\label{P1}
Suppose $a$, $b \in \mathbb{T}$, $a < b$,
and $f(t)$ is continuous on $[a, b]$. Then,
\begin{equation*}
\int_{a}^{b}f(t)\Delta t
= [\sigma(a) - a]f(a)
+\int_{\sigma(a)}^{b}f(t)\Delta t.
\end{equation*}
\end{proposition}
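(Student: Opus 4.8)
The plan is to pass to a delta antiderivative of $f$ and then to distinguish whether the left endpoint $a$ is right-dense or right-scattered. Since $f$ is continuous on the compact set $[a,b]\subset\mathbb{T}$, it is rd-continuous and bounded there, hence delta integrable on $[a,b]$ and on the subinterval $[\sigma(a),b]$, and it admits a delta antiderivative $F:[a,b]\to\mathbb{R}$. Fix such an $F$: it is continuous on $[a,b]$ and satisfies $F^{\Delta}(t)=f(t)$ for all $t\in[a,b)$, and its restriction to $[\sigma(a),b]$ is likewise a delta antiderivative of $f$ there. By the definition of the $\Delta$-integral,
\[
\int_{a}^{b}f(t)\,\Delta t=F(b)-F(a),\qquad \int_{\sigma(a)}^{b}f(t)\,\Delta t=F(b)-F(\sigma(a)),
\]
so the asserted formula is equivalent to the single identity $F(\sigma(a))-F(a)=[\sigma(a)-a]\,f(a)$.

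It then remains to verify this identity, and here I would split into two cases. Because $a<b$, the point $a$ is not a left-scattered maximum of $\mathbb{T}$, so $a\in\mathbb{T}^{\kappa}$, the derivative $F^{\Delta}(a)$ is defined, and $F^{\Delta}(a)=f(a)$. If $a$ is right-dense, then $\sigma(a)=a$ and both sides of the reduced identity vanish, so there is nothing to prove. If $a$ is right-scattered, then $\sigma(a)>a$ and, using that $F$ is continuous at $a$, the delta derivative at $a$ collapses to an ordinary difference quotient,
\[
f(a)=F^{\Delta}(a)=\frac{F(\sigma(a))-F(a)}{\sigma(a)-a},
\]
so multiplying by $\sigma(a)-a$ gives exactly $F(\sigma(a))-F(a)=[\sigma(a)-a]\,f(a)$. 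Together with the two integral formulas displayed above, this settles the statement.

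The only step that is more than bookkeeping is the reduction of $F^{\Delta}(a)$ to $\bigl(F(\sigma(a))-F(a)\bigr)/(\sigma(a)-a)$ at a right-scattered point: this is the standard characterization of the delta derivative at such points and relies on the continuity of $F$ there (see \cite{AB,AgBoh}). Granting that, the proof is short and purely computational — choose the antiderivative, make the case distinction on $a$, and rearrange — so I do not anticipate any genuine obstacle; the only care needed is in keeping track of the structure of $\mathbb{T}$ near $a$ when invoking that formula.
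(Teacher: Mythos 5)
Your proof is correct, but there is nothing in the paper to compare it with: Proposition~\ref{P1} is quoted from Bohner and Peterson \cite{AgBoh} as a preliminary and is not proved in the paper at all. Your route --- fix a delta antiderivative $F$ of $f$ (which exists because continuity on $[a,b]$ gives rd-continuity), reduce the claim via the Cauchy-integral definition to the single identity $F(\sigma(a))-F(a)=[\sigma(a)-a]f(a)$, and settle it by the case distinction right-dense versus right-scattered, using at a right-scattered point the difference-quotient form of the delta derivative of the continuous function $F$ --- is the standard textbook argument in slightly repackaged form. It is essentially equivalent to splitting $\int_a^b f(t)\Delta t=\int_a^{\sigma(a)}f(t)\Delta t+\int_{\sigma(a)}^{b}f(t)\Delta t$ and evaluating $\int_a^{\sigma(a)}f(t)\Delta t=[\sigma(a)-a]f(a)$ (Theorem~1.75 in \cite{AgBoh}), which itself rests on the same right-scattered identity you invoke. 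The side conditions you use are all in order: $a<b$ with $a,b\in\mathbb{T}$ gives $a\in\mathbb{T}^{\kappa}$ and $\sigma(a)\le b$, so $F^{\Delta}(a)=f(a)$ is defined and the integral from $\sigma(a)$ to $b$ makes sense, and the same antiderivative $F$ serves on the subinterval since the jump operator of $\mathbb{T}$ does not leave $[a,b]$ there. So the argument stands as written.
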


\begin{proposition}[See \cite{AJ}]
\label{P2}
Suppose $\mathbb{T}$ is a time scale and $f$ is an increasing continuous function
on  $[a,b]$. If $F$ is the extension of $f$ to the real interval $[a,b]$ given by
\begin{equation*}
F(s) :=
\begin{cases}
f(s) & \textrm{ if } s \in\mathbb{T} , \\
f(t) & \textrm{ if } s \in (t,\sigma(t))\not\subset\mathbb{T},
\end{cases}
\end{equation*}
then
\begin{equation*}
\int_{a}^{b} f(t) \Delta t \leq \int_{a}^{b} F(t)dt.
\end{equation*}
\end{proposition}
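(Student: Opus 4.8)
\noindent\emph{Proof idea.} The plan is to read both integrals off the coarse structure of $\mathbb{T}$ inside $[a,b]$. Since $\mathbb{T}$ is closed, $[a,b]\setminus\mathbb{T}$ is relatively open, hence an at most countable disjoint union of ``gaps'', each of the form $(t_i,\sigma(t_i))$ with $t_i\in[a,b)\cap\mathbb{T}$ right-scattered (the left endpoint $\alpha$ of a component $(\alpha,\beta)$ of $[a,b]\setminus\mathbb{T}$ lies in $\mathbb{T}$ and satisfies $\sigma(\alpha)=\beta$). By construction $F$ equals the constant $f(t_i)$ on $(t_i,\sigma(t_i))$, so $\int_{t_i}^{\sigma(t_i)}F(s)\,ds=(\sigma(t_i)-t_i)f(t_i)$, whereas on $A:=[a,b]\setminus\bigcup_i(t_i,\sigma(t_i))\subseteq\mathbb{T}$ we have $F=f$. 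Two remarks use the hypotheses: $f$ is continuous on the compact set $[a,b]\cap\mathbb{T}$, hence bounded, so $\sum_i(\sigma(t_i)-t_i)\,|f(t_i)|\le\|f\|_\infty\,(b-a)<\infty$; and $f$ increasing forces $F$ to be nondecreasing on $[a,b]$, hence Riemann integrable, so the right-hand side $\int_a^bF(t)\,dt$ makes sense.

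The core point is that the $\Delta$-integral respects the same decomposition. The contribution of a single right-scattered point $t_i$ is computed by Proposition~\ref{P1} applied on $[t_i,\sigma(t_i)]$, which gives $\int_{t_i}^{\sigma(t_i)}f(t)\,\Delta t=(\sigma(t_i)-t_i)f(t_i)$, matching $\int_{t_i}^{\sigma(t_i)}F(s)\,ds$; and the remaining part of $\int_a^b f\,\Delta t$ should coincide with the ordinary integral of $f=F$ over $A$. When only finitely many gaps are present this is assembled term by term (peel off each right-scattered point with Proposition~\ref{P1}, and on each of the finitely many interior pieces of $A$ identify the $\Delta$-integral with the Riemann integral), which in fact yields the equality $\int_a^b f(t)\,\Delta t=\int_a^bF(t)\,dt$, and in particular the stated inequality. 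The general case follows by exhausting the countably many gaps by finite subfamilies and letting the total length of the omitted gaps go to $0$, the error being at most $\|f\|_\infty$ times that length.

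The step I expect to be the main obstacle is making this exhaustion rigorous when the right-scattered points accumulate, since then $[a,b]$ cannot be split into finitely many gaps together with finitely many intervals contained in $\mathbb{T}$, and a partition with nodes in $\mathbb{T}$ need not be refinable to arbitrarily small mesh. The clean route is a Darboux sandwich: for any partition $a=u_0<\cdots<u_n=b$ with $u_j\in\mathbb{T}$, monotonicity gives that both $\int_{u_{j-1}}^{u_j}f\,\Delta t$ and $\int_{u_{j-1}}^{u_j}F\,ds$ lie in the interval $[\,f(u_{j-1})(u_j-u_{j-1}),\,f(u_j)(u_j-u_{j-1})\,]$, and that they coincide exactly whenever $u_j=\sigma(u_{j-1})$. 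Fixing $\varepsilon>0$, one isolates the finitely many gaps of length $\ge\varepsilon$, chooses a partition in which these are individual subintervals and every other subinterval has length $<\varepsilon$ (possible by a straightforward supremum argument on the part of $[a,b]$ where all gaps are smaller than $\varepsilon$), and concludes that $\bigl|\int_a^b f\,\Delta t-\int_a^bF\,ds\bigr|\le\varepsilon\,(f(b)-f(a))$; letting $\varepsilon\to0$ finishes the proof. Everything else is bookkeeping with Proposition~\ref{P1} and the definition of the $\Delta$-integral. Alternatively, one may simply invoke the Cabada--Vivero identification of the Lebesgue $\Delta$-integral with a usual Lebesgue integral of precisely this extension $F$; the aim above is to stay self-contained and, since only ``$\le$'' is asked for, elementary.
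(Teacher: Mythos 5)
Your proposal is correct, but note that the paper itself offers no proof of Proposition~\ref{P2} to compare against: it is quoted as a preliminary result from the cited reference of Ahmadkhanlu and Jahanshahi, so your argument is a self-contained reconstruction rather than a variant of an in-paper proof. Your route is sound: the decomposition of $[a,b]\setminus\mathbb{T}$ into countably many gaps $(t_i,\sigma(t_i))$ is right, the monotonicity of $F$ (hence its Riemann integrability) is verified correctly, the per-gap computation via Proposition~\ref{P1} is exactly the right use of that result, and — crucially — you correctly identify the accumulation of right-scattered points as the real obstacle and resolve it with the Darboux sandwich. The one ingredient you describe only loosely, the existence for each $\varepsilon>0$ of a partition with nodes in $\mathbb{T}$ whose subintervals are either gaps or of length at most $\varepsilon$, is precisely the standard partition lemma underlying the Riemann $\Delta$-integral (it appears in Bohner--Peterson and in Guseinov's integration paper), and your supremum sketch is the usual proof of it; with that lemma in hand, the estimate $\bigl|\int_a^b f\,\Delta t-\int_a^b F\,dt\bigr|\le\varepsilon\,(f(b)-f(a))$ follows by additivity and telescoping exactly as you say. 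Two small remarks: boundedness of $f$ already follows from monotonicity ($f(a)\le f\le f(b)$), so continuity is only needed for $\Delta$-integrability; and your argument in fact establishes equality of the two integrals (the Cabada--Vivero identification you mention), which is strictly stronger than the stated inequality, so the conclusion of Proposition~\ref{P2} follows a fortiori.
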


\begin{definition}[Fractional integral on time scales \cite{MyID:328}]
\label{def:FI}
Suppose $\mathbb{T}$ is a time scale, $[a,b]$ is an interval of $\mathbb{T}$,
and $h$ is an integrable function on $[a,b]$. Let $0 < \alpha <1$.
Then the (left) fractional integral of order $\alpha$ of $h$ is defined by
$$
{_{a}^{\mathbb{T}}I}_{t}^{\alpha}h(t)
:= \int_{a}^{t} \frac{(t-s)^{\alpha-1}}{\Gamma(\alpha)}h(s)\Delta s,
$$
where $\Gamma$ is the gamma function.
\end{definition}

Using Definition~\ref{def:FI}, one defines the Riemann--Liouville
fractional derivative on time scales.

\begin{definition}[Riemann--Liouville derivative on time scales \cite{MyID:328}]
\label{def:FD}
Let $\mathbb{T}$ be a time scale, $t\in\mathbb{T}$, $0 < \alpha <1$,
and $h:\mathbb{T}\rightarrow \mathbb{R}$. The (left) Riemann--Liouville
fractional derivative of order $\alpha$ of $h$ is defined by
\begin{equation*}
{_{a}^{\mathbb{T}}D}_{t}^{\alpha}h(t)
:=\frac{1}{\Gamma(1-\alpha)}\left(\int_{a}^{t}
(t-s)^{-\alpha}h(s)\Delta s\right)^{\Delta}.
\end{equation*}
\end{definition}


\section{Main Results}
\label{sec3}

We begin by generalizing the concepts of fractional integral
and fractional derivative given by Definitions~\ref{def:FI}
and \ref{def:FD}, by introducing the concept of fractional
integration and fractional differentiation on time scales
of a function with respect to another function.

\begin{definition}[Generalized fractional integral on time scales]
\label{def:GFI}
Suppose $\mathbb{T}$ is a time scale, $[a,b]$ is an interval of $\mathbb{T}$,
$h$ is an integrable function on $[a,b]$, and $g$ is monotone having
a delta derivative $g^{\Delta}$ with $g^{\Delta}(t)\neq0$ for any $t\in[a,b]$.
Let $0 < \alpha <1$. Then, the (left) generalized fractional integral
of order $\alpha$ of $h$ with respect to $g$ is defined by
$$
{_{a;g}^{\mathbb{T}}I}_{t}^{\alpha}h(t)
= \int_{a}^{t} \frac{1}{\Gamma(\alpha)}
(g(t)-g(s))^{\alpha-1}g^{\Delta}(s)h(s)\Delta s.
$$
\end{definition}

\begin{definition}[Generalized fractional derivative on time scales]
\label{def:GFD}
Suppose $\mathbb{T}$ is a time scale, $[a,b]$ is an interval of $\mathbb{T}$,
$h$ is an integrable function on $[a,b]$, and $g$ is monotone having
a delta derivative $g^{\Delta}$ with $g^{\Delta}(t)\neq0$ for any $t\in[a,b]$.
Let $0 < \alpha <1$. Then, the (left) generalized fractional derivative
of order $\alpha$ of $h$ with respect to $g$ is defined by
$$
{_{a;g}^{\mathbb{T}}D}_{t}^{\alpha}h(t)
= \frac{1}{\Gamma(1-\alpha)}\frac{1}{g^{\Delta}(t)}\left(
\int_{a}^{t} (g(t)-g(s))^{-\alpha}g^{\Delta}(s)h(s)\Delta s\right)^{\Delta}.
$$
\end{definition}

\begin{remark}
If $\mathbb{T}=\mathbb{R}$, then Definitions~\ref{def:GFI} and \ref{def:GFD}
give, respectively, the well-known generalized fractional integral and derivative
of Riemann--Liouville \cite[Section~18.2]{SaKiMa}.
\end{remark}

Let $z$ be a monotone function having a delta derivative
$z^{\Delta}$ with $z^{\Delta}(t)\neq0$ for any $t\in \mathcal{J}$.
We consider the following initial value problem:
\begin{equation}
\label{eq1}
\begin{gathered}
_{t_{0};z}^{\mathbb{T}}D^{\alpha}_{t}y(t)=f(t,y(t)),
\quad  t\in [t_{0},t_{0}+a]
=\mathcal{J}\subseteq \mathbb{T},
\quad 0<\alpha<1,\\
_{t_{0};z}^{\mathbb{T}}I^{\alpha}_{t}y(t_{0})=0,
\end{gathered}
\end{equation}
where $_{t_{0};z}^{\mathbb{T}}D^{\alpha}_{t}$
and $_{t_{0};z}^{\mathbb{T}}I^{1-\alpha}_{t}$
is the (left) Riemann--Liouville generalized fractional
derivative and integral with respect to function $z$, and
$f : \mathcal{J} \times\mathbb{R} \rightarrow \mathbb{R}$
is a right-dense continuous function. Our main goal is to obtain
sufficient conditions for the existence and uniqueness
of solution to problem \eqref{eq1}.

\medskip

In what follows, $\mathbb{T}$ is a given time scale and
$\mathcal{J}=[t_{0},t_{0}+a] \subseteq \mathbb{T}$.

\begin{lemma}
\label{Lem:sol}
Let $0<\alpha< 1$, $\mathcal{J}\subseteq\mathbb{T}$, and
$f: \mathcal{J}\times\mathbb{R}\rightarrow\mathbb{R}$.
Function $y\in\mathcal{C}(\mathcal{J},\mathbb{R})$
is a solution of problem \eqref{eq1}
if and only if is a solution
of the following integral equation:
$$
y(t)=\frac{z^{\Delta}(t)}{\Gamma(\alpha)}\int_{t_{0}}^{t}
(z(t)-z(s))^{\alpha-1}z^{\Delta}(s)f(s,y(s))\Delta s.
$$
\end{lemma}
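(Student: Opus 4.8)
The plan is to establish the equivalence by applying a suitable fractional integral operator to both sides of the dynamic equation in \eqref{eq1}, exploiting that $_{t_{0};z}^{\mathbb{T}}I^{\alpha}_{t}$ and $_{t_{0};z}^{\mathbb{T}}D^{\alpha}_{t}$ should compose to the identity (up to a term that is killed by the initial condition), exactly as in the classical Riemann--Liouville theory recalled in \cite[Section~18.2]{SaKiMa}. Concretely, I would first assume $y\in\mathcal{C}(\mathcal{J},\mathbb{R})$ solves \eqref{eq1} and apply $_{t_{0};z}^{\mathbb{T}}I^{\alpha}_{t}$ to the identity $_{t_{0};z}^{\mathbb{T}}D^{\alpha}_{t}y(t)=f(t,y(t))$. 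Using Definitions~\ref{def:GFI} and \ref{def:GFD}, the left-hand side becomes
$$
\frac{1}{\Gamma(\alpha)\Gamma(1-\alpha)}\int_{t_0}^{t}(z(t)-z(s))^{\alpha-1}z^{\Delta}(s)
\left(\int_{t_0}^{s}(z(s)-z(\tau))^{-\alpha}z^{\Delta}(\tau)y(\tau)\Delta\tau\right)^{\Delta}\Delta s .
$$
A delta integration by parts on $[t_0,t]$, together with the semigroup-type identity for the Beta-function kernel $\int (z(t)-z(s))^{\alpha-1}(z(s)-z(\tau))^{-\alpha}z^{\Delta}(s)\,\Delta s = \Gamma(\alpha)\Gamma(1-\alpha)$ (which is the content that must be verified on time scales, reducing to the real-variable Beta integral after the substitution $u=z(s)$), should collapse this to $y(t)$ minus a boundary term proportional to $\big(_{t_{0};z}^{\mathbb{T}}I^{1-\alpha}_{t}y\big)(t_0)$; the initial condition $_{t_{0};z}^{\mathbb{T}}I^{\alpha}_{t}y(t_{0})=0$ removes that term, leaving precisely the claimed integral equation after writing $_{t_{0};z}^{\mathbb{T}}I^{\alpha}_{t}f(t,y(t))$ out in full (note the integral equation in the statement is $y(t)=\,_{t_0;z}^{\mathbb{T}}I^{\alpha}_t f(\cdot,y(\cdot))(t)$ up to the displayed $z^{\Delta}(t)$ prefactor, which I would track carefully).

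Conversely, I would start from the integral equation, recognize the right-hand side as $_{t_{0};z}^{\mathbb{T}}I^{\alpha}_{t}f(\cdot,y(\cdot))(t)$, apply $_{t_{0};z}^{\mathbb{T}}D^{\alpha}_{t}$ to both sides, and use the left-inverse property $_{t_{0};z}^{\mathbb{T}}D^{\alpha}_{t}\,{_{t_{0};z}^{\mathbb{T}}I^{\alpha}_{t}}=\mathrm{Id}$ to recover the dynamic equation; separately, applying $_{t_{0};z}^{\mathbb{T}}I^{1-\alpha}_{t}$ to the integral equation and evaluating at $t=t_0$, one checks via the same Beta-kernel computation that $_{t_{0};z}^{\mathbb{T}}I^{\alpha}_{t}y(t_0)=\big(_{t_{0};z}^{\mathbb{T}}I^{1}_{t}f(\cdot,y(\cdot))\big)(t_0)=0$, since an ordinary $\Delta$-integral over the degenerate interval $[t_0,t_0]$ vanishes. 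This recovers the initial condition, closing the equivalence.

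The main obstacle is the rigorous justification, on an arbitrary time scale, of the kernel identity and of the interchange of the two iterated $\Delta$-integrals (a Fubini/Dirichlet-type argument), together with the delta integration by parts — on time scales these steps are delicate because of right-scattered points where $\sigma(s)>s$ and where the kernel $(z(t)-z(s))^{-\alpha}$ may be large near the diagonal. I expect to handle the diagonal singularity by the substitution $u=z(s)$, $v=z(\tau)$ (legitimate since $z$ is monotone with $z^{\Delta}\neq 0$) combined with Proposition~\ref{P2} to dominate the $\Delta$-integral by an ordinary Lebesgue integral, reducing the needed identities to the classical ones; the integrability hypothesis on $h$ (here $f(\cdot,y(\cdot))$, which is rd-continuous hence bounded on the compact $\mathcal{J}$) ensures all the integrals converge. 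A minor technical point to state explicitly is that differentiating the integral equation requires knowing $_{t_{0};z}^{\mathbb{T}}I^{\alpha}_{t}f(\cdot,y(\cdot))$ is delta-differentiable, which follows because its $_{t_{0};z}^{\mathbb{T}}I^{1-\alpha}_{t}$-regularized antiderivative is, by the semigroup property, $_{t_{0};z}^{\mathbb{T}}I^{1}_{t}f(\cdot,y(\cdot))$, an absolutely continuous (in the time-scale sense) function.
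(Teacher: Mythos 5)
Your overall strategy coincides with the paper's: apply ${_{t_{0};z}^{\mathbb{T}}I}_{t}^{\alpha}$ to the dynamic equation and collapse the composition ${_{t_{0};z}^{\mathbb{T}}I}_{t}^{\alpha}\,{_{t_{0};z}^{\mathbb{T}}I}_{t}^{1-\alpha}$ into ${_{t_{0};z}^{\mathbb{T}}I}_{t}^{1}$, whose delta derivative divided by $z^{\Delta}(t)$ returns $y(t)$. In fact you go further than the printed proof: you treat the converse direction and explain where the initial condition ${_{t_{0};z}^{\mathbb{T}}I}_{t}^{\alpha}y(t_{0})=0$ enters, neither of which appears explicitly in the paper (its proof only runs the forward computation and never invokes the initial condition).

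The genuine problem is the step you yourself single out as ``the content that must be verified on time scales'': the Beta-kernel identity $\int_{\tau}^{t}(z(t)-z(s))^{\alpha-1}(z(s)-z(\tau))^{-\alpha}z^{\Delta}(s)\,\Delta s=\Gamma(\alpha)\Gamma(1-\alpha)$, equivalently the semigroup law ${_{t_{0};z}^{\mathbb{T}}I}^{\alpha}\,{_{t_{0};z}^{\mathbb{T}}I}^{1-\alpha}={_{t_{0};z}^{\mathbb{T}}I}^{1}$. Your proposed justification --- substitute $u=z(s)$ and invoke Proposition~\ref{P2} to reduce to the classical Beta integral --- cannot close this: Proposition~\ref{P2} gives only the one-sided estimate $\int_{a}^{b}f(t)\Delta t\leq\int_{a}^{b}F(t)\,dt$, which is adequate for the contraction and boundedness estimates in Theorems~\ref{th2.3} and \ref{2ndMR} but useless for an exact identity, and a change of variables does not convert a $\Delta$-integral into a Lebesgue integral when scattered points are present; at right-scattered points the $\Delta$-integral of the kernel genuinely differs from the Lebesgue one (for instance on $\mathbb{T}=\mathbb{Z}$ with $z=\mathrm{id}$ the left-hand side is a finite sum, not the constant $\pi/\sin(\pi\alpha)$). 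So that step, as described, would fail on a general time scale. To be fair, the paper's own proof silently assumes the same composition law, together with an unjustified commutation of ${_{t_{0};z}^{\mathbb{T}}I}_{t}^{\alpha}$ with the operator $\frac{1}{z^{\Delta}}(\cdot)^{\Delta}$; your write-up at least localizes the difficulty, but neither your proposed repair nor the paper's argument actually establishes it.
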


\begin{proof}
By Definitions~\ref{def:GFI} and \ref{def:GFD}, we have
\begin{equation*}
{_{t_{0};z}^{\mathbb{T}}I}_{t}^{1-\alpha}y(t)
=   \frac{1}{\Gamma(1-\alpha)}\int_{t_{0}}^{t}(z(t)-z(s))^{
-\alpha}z^{\Delta}(s)y(s)\Delta s
\end{equation*}
and 
\begin{equation*}
\begin{split}
{_{t_{0};z}^{\mathbb{T}}D}_{t}^{\alpha}y(t)
&= \frac{1}{z^{\Delta}(t)}\frac{1}{\Gamma(1-\alpha)}\left(\int_{t_{0}}^{t} (z(t)-z(s))^{-\alpha}z^{\Delta}(s)y(s)\Delta s\right)^{\Delta}\\
&= \frac{1}{z^{\Delta}(t)}\left({_{t_{0};z}^{\mathbb{T}}I}_{t}^{1-\alpha}y(t)\right)^{\Delta}
=\frac{1}{z^{\Delta}(t)}\left(\Delta \circ{_{t_{0};z}^{\mathbb{T}}I}_{t}^{1-\alpha}\right)y(t).
\end{split}
\end{equation*}
Moreover,
\begin{equation}
\label{eq:proof:lem3.4}
\begin{split}
{_{t_{0};z}^{\mathbb{T}}I}_{t}^{\alpha}[{_{t_{0};z}^{\mathbb{T}}D}_{t}^{\alpha}y(t)]
&=\frac{1}{z^{\Delta}(t)} \left({_{t_{0};z}^{\mathbb{T}}I}_{t}^{\alpha}(
{_{t_{0};z}^{\mathbb{T}}I}_{t}^{1-\alpha}y(t))\right)^{\Delta}\\
&= \frac{1}{z^{\Delta}(t)}\left({_{t_{0};z}^{\mathbb{T}}I}_{t}^{1}y(t)\right)^{\Delta}\\
&=  \frac{1}{z^{\Delta}(t)}y(t).
\end{split}
\end{equation}
It follows from \eqref{eq:proof:lem3.4}, \eqref{eq1}
and Definition~\ref{def:GFI} that
\begin{equation*}
\begin{split}
y(t) &= z^{\Delta}(t) {_{t_{0};z}^{\mathbb{T}}I}_{t}^{\alpha}\left[
{_{t_{0};z}^{\mathbb{T}}D}_{t}^{\alpha}y(t)\right]\\
&= z^{\Delta}(t){_{t_{0};z}^{\mathbb{T}}I}_{t}^{\alpha}f(t,y(t))\\
&= \frac{z^{\Delta}(t)}{\Gamma(\alpha)}\int_{t_{0}}^{t}
(z(t)-z(s))^{\alpha-1}z^{\Delta}(s)f(s,y(s))\Delta s.
\end{split}
\end{equation*}
The proof is complete.
\end{proof}

Our first main result is based on the
Banach fixed point theorem \cite{A.Granas}.

\begin{theorem}
\label{th2.3}
Let $f:\mathcal{J}\times \mathbb{R} \rightarrow \mathbb{R}$
be continuous and assume there exists a constant $L>0$ such that
\begin{equation*}
|f(t,u)-f(t,v)| \leq  L \|u-v\|_{\infty}
\end{equation*}
for $t\in  \mathcal{J}$ and
$u,v \in \mathcal{C}(\mathcal{J},\mathbb{R})$.
If
\begin{equation}
\label{eq1.4a}
\frac{Lz^{\Delta}(t) \, M_{\alpha}(t)} \, (t-t_{0}){\Gamma(\alpha)}<1,
\quad t\in  \mathcal{J},
\end{equation}
where
\begin{equation}
\label{eq:M:alpha}
M_{\alpha}(t) := \frac{\displaystyle
\int_{t_{0}}^{t}(z(t)-z(s))^{\alpha-1}z^{\Delta}(s)ds}{t-t_{0}},
\end{equation}
then problem \eqref{eq1} has a unique solution
on $\mathcal{J}$.
\end{theorem}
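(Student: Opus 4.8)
The plan is to recast problem~\eqref{eq1} as a fixed point equation via Lemma~\ref{Lem:sol} and then apply the Banach fixed point theorem on the Banach space $\mathcal{C}(\mathcal{J},\mathbb{R})$. Define the operator $N:\mathcal{C}(\mathcal{J},\mathbb{R})\to\mathcal{C}(\mathcal{J},\mathbb{R})$ by
$$
(Ny)(t):=\frac{z^{\Delta}(t)}{\Gamma(\alpha)}\int_{t_{0}}^{t}
(z(t)-z(s))^{\alpha-1}z^{\Delta}(s)f(s,y(s))\,\Delta s ,
$$
so that, by Lemma~\ref{Lem:sol}, a function $y\in\mathcal{C}(\mathcal{J},\mathbb{R})$ solves \eqref{eq1} on $\mathcal{J}$ if and only if $Ny=y$. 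First I would check that $N$ is well defined, that is, that $Ny$ is continuous on $\mathcal{J}$ whenever $y$ is: this rests on the continuity of $f$, the boundedness of $z^{\Delta}$ on the compact interval $\mathcal{J}$, and the delta-integrability of the weakly singular kernel $s\mapsto(z(t)-z(s))^{\alpha-1}z^{\Delta}(s)$, which is guaranteed since $0<\alpha<1$ and $z$ is monotone with $z^{\Delta}$ nonvanishing.

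The core of the argument is to prove that $N$ is a contraction. For $u,v\in\mathcal{C}(\mathcal{J},\mathbb{R})$ and $t\in\mathcal{J}$ I would estimate
$$
|(Nu)(t)-(Nv)(t)|
\le\frac{z^{\Delta}(t)}{\Gamma(\alpha)}\int_{t_{0}}^{t}
(z(t)-z(s))^{\alpha-1}z^{\Delta}(s)\,\bigl|f(s,u(s))-f(s,v(s))\bigr|\,\Delta s ,
$$
and then invoke the Lipschitz hypothesis to bound $\bigl|f(s,u(s))-f(s,v(s))\bigr|\le L\|u-v\|_{\infty}$, pulling the constant $L\|u-v\|_{\infty}$ out of the integral. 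The surviving delta integral $\int_{t_{0}}^{t}(z(t)-z(s))^{\alpha-1}z^{\Delta}(s)\,\Delta s$ is then bounded from above by the corresponding ordinary (Lebesgue) integral through Proposition~\ref{P2}, which by \eqref{eq:M:alpha} equals $M_{\alpha}(t)(t-t_{0})$. This yields
$$
|(Nu)(t)-(Nv)(t)|\le\frac{L\,z^{\Delta}(t)\,M_{\alpha}(t)\,(t-t_{0})}{\Gamma(\alpha)}\,\|u-v\|_{\infty},
\qquad t\in\mathcal{J},
$$
and hence, taking the supremum over $t\in\mathcal{J}$ and using \eqref{eq1.4a},
$$
\|Nu-Nv\|_{\infty}\le k\,\|u-v\|_{\infty},\qquad
k:=\sup_{t\in\mathcal{J}}\frac{L\,z^{\Delta}(t)\,M_{\alpha}(t)\,(t-t_{0})}{\Gamma(\alpha)}<1 .
$$

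Since $\mathcal{C}(\mathcal{J},\mathbb{R})$ is complete and $0\le k<1$, the Banach fixed point theorem \cite{A.Granas} provides a unique fixed point $y\in\mathcal{C}(\mathcal{J},\mathbb{R})$ of $N$, which by Lemma~\ref{Lem:sol} is exactly the unique solution of \eqref{eq1} on $\mathcal{J}$. I expect the delicate step to be the transition from the delta integral to the Lebesgue integral defining $M_{\alpha}$: one must verify carefully that Proposition~\ref{P2} applies to the kernel (its monotonicity in $s$, using that $z$ is monotone and $\alpha-1<0$, together with the behaviour of its extension across the gaps of $\mathbb{T}$) and deal with the weak singularity of $(z(t)-z(s))^{\alpha-1}$ at $s=t$; establishing the well-definedness of $N$ (continuity of $Ny$) needs a comparable dominated-convergence-type argument on the time scale.
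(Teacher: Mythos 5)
Your proposal is correct and follows essentially the same route as the paper: recast \eqref{eq1} as a fixed point equation for the operator \eqref{eq3:b} via Lemma~\ref{Lem:sol}, establish the contraction estimate by the Lipschitz bound, pass from the delta integral to the ordinary integral (Proposition~\ref{P2}) to identify $M_{\alpha}(t)(t-t_{0})$, and conclude with the Banach fixed point theorem under \eqref{eq1.4a}. Your explicit attention to the well-definedness of the operator and to the hypotheses needed to apply Proposition~\ref{P2} to the kernel is a welcome refinement of the same argument, not a different one.
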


\begin{proof}
We transform problem \eqref{eq1} into a fixed point problem.
Consider the operator
$F : \mathcal{C}(\mathcal{J},\mathbb{R})\to \mathcal{C}(\mathcal{J},\mathbb{R})$
defined by
\begin{equation}
\label{eq3:b}
F(y)(t)=\frac{z^{\Delta}(t)}{\Gamma(\alpha)}
\int_{t_{0}}^{t}(z(t)-z(s))^{\alpha-1}z^{\Delta}(s) f(s,y(s)) \Delta s.
\end{equation}
We need to prove that $F$ has a fixed point, which is a
unique solution of \eqref{eq1} on $\mathcal{J}$.
For that, we show that $F$ is a contraction.
Let $x,y \in \mathcal{C}(\mathcal{J},\mathbb{R})$.
For $t\in \mathcal{J}$, we have
\begin{equation*}
\begin{split}
|F(x)(t)&-F(y)(t)|\\
&= \left|  \frac{z^{\Delta}(t)}{\Gamma(\alpha)}
\int_{t_{0}}^{t}(z(t)-z(s))^{\alpha-1}z^{\Delta}(s) [f(s,x(s))-f(s,y(s)) ]\Delta s\right|\\
&\leq \frac{z^{\Delta}(t)}{\Gamma(\alpha)}\int_{t_{0}}^{t}(z(t)-z(s))^{\alpha-1}
z^{\Delta}(s) |f(s,x(s))-f(s,y(s))|\Delta s\\
&\leq \frac{L z^{\Delta}(t)\|x-y\|_{\infty}}{\Gamma(\alpha)}\int_{t_{0}}^{t}
(z(t)-z(s))^{\alpha-1}z^{\Delta}(s)\Delta s\\
&\leq \frac{L z^{\Delta}(t)\|x-y\|_{\infty}}{\Gamma(\alpha)}
\int_{t_{0}}^{t}(z(t)-z(s))^{\alpha-1}z^{\Delta}(s)ds.
\end{split}
\end{equation*}
Then, it follows from \eqref{eq:M:alpha} that
\begin{equation*}
|F(x)(t)-F(y)(t)|
\leq \frac{L z^{\Delta}(t) M_{\alpha}(t) (t-t_{0})}{\Gamma(\alpha)}\| x-y\|_{\infty}.
\end{equation*}
By \eqref{eq1.4a}, $F$ is a contraction and thus, by the contraction mapping theorem,
we deduce that $F$ has a unique fixed point. This fixed point is the unique solution
of \eqref{eq1}.
\end{proof}

Now, we give our second main result, which is an existence result
based upon the nonlinear alternative of Leray--Schauder,
applied to completely continuous operators \cite{A.Granas}.

\begin{theorem}
\label{2ndMR}
Suppose $f :\mathcal{J}\times\mathbb{R}\rightarrow\mathbb{R}$
is a rd-continuous bounded function such that there exists
$N > 0$ with $|f(t,y)|\leq N$ for all $t\in\mathcal{J}$, $y\in\mathbb{R}$.
Then problem \eqref{eq1} has a solution on $\mathcal{J}$.
\end{theorem}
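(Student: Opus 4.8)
The plan is to apply the Leray--Schauder nonlinear alternative to the same integral operator $F$ used in the proof of Theorem~\ref{th2.3}, namely
\begin{equation*}
F(y)(t)=\frac{z^{\Delta}(t)}{\Gamma(\alpha)}
\int_{t_{0}}^{t}(z(t)-z(s))^{\alpha-1}z^{\Delta}(s) f(s,y(s)) \Delta s,
\end{equation*}
acting on $\mathcal{C}(\mathcal{J},\mathbb{R})$. By Lemma~\ref{Lem:sol}, fixed points of $F$ are exactly the solutions of \eqref{eq1}, so it suffices to produce one. Since the Leray--Schauder alternative requires $F$ to be completely continuous and to rule out the ``blow-up along the boundary'' alternative, I would organize the proof into three steps: (i) $F$ maps bounded sets into bounded sets; (ii) $F$ maps bounded sets into equicontinuous sets, so that by Arzel\`a--Ascoli $F$ is completely continuous (continuity of $F$ itself follows routinely from continuity of $f$ and dominated convergence for the $\Delta$-integral); (iii) the set of possible fixed points of $\lambda F$, $\lambda\in(0,1)$, is a priori bounded, which because $f$ is globally bounded by $N$ is immediate.

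For step (i), using $|f(t,y)|\le N$ and estimating the $\Delta$-integral by the ordinary integral via Proposition~\ref{P2} (exactly as in the last inequality chain of the proof of Theorem~\ref{th2.3}), one gets
\begin{equation*}
|F(y)(t)|\le \frac{N\,z^{\Delta}(t)}{\Gamma(\alpha)}\int_{t_{0}}^{t}(z(t)-z(s))^{\alpha-1}z^{\Delta}(s)\,ds
= \frac{N\,z^{\Delta}(t)\,M_{\alpha}(t)\,(t-t_{0})}{\Gamma(\alpha)}=:\ell,
\end{equation*}
so $\|F(y)\|_{\infty}\le \ell$ for every $y$, with $\ell$ independent of $y$; this gives a uniform bound $\Omega=\{y:\|y\|_{\infty}<\ell+1\}$ and simultaneously handles step (iii), since any $y=\lambda F(y)$ with $0<\lambda<1$ satisfies $\|y\|_{\infty}\le\ell<\ell+1$, hence $y\notin\partial\Omega$. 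For step (ii), for $t_{1}<t_{2}$ in $\mathcal{J}$ I would write $F(y)(t_{2})-F(y)(t_{1})$, split the integral over $[t_{0},t_{1}]$ and $[t_{1},t_{2}]$, and bound each piece by $N$ times a combination of the quantities $\bigl|\int_{t_{0}}^{t_{1}}[(z(t_{2})-z(s))^{\alpha-1}-(z(t_{1})-z(s))^{\alpha-1}]z^{\Delta}(s)\,ds\bigr|$ and $\int_{t_{1}}^{t_{2}}(z(t_{2})-z(s))^{\alpha-1}z^{\Delta}(s)\,ds$, plus a term from the factor $z^{\Delta}(t_{2})-z^{\Delta}(t_{1})$; each of these tends to $0$ as $t_{2}\to t_{1}$ uniformly in $y$, using continuity of $z$ and $z^{\Delta}$ and integrability of the singular kernel. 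With (i) and (ii) in hand, Arzel\`a--Ascoli on the time scale (bounded rd-continuous functions on the compact set $\mathcal{J}$) yields that $F(\overline{\Omega})$ is relatively compact, so $F$ is completely continuous. The nonlinear alternative then gives either a fixed point of $F$ in $\overline{\Omega}$ (done) or a point $y\in\partial\Omega$ with $y=\lambda F(y)$ for some $\lambda\in(0,1)$; the latter is excluded by step (iii). Hence $F$ has a fixed point, which is a solution of \eqref{eq1}.

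The main obstacle I anticipate is step (ii), the equicontinuity estimate: controlling the difference of the two singular kernels $(z(t_{2})-z(s))^{\alpha-1}$ and $(z(t_{1})-z(s))^{\alpha-1}$ near the singularity $s=t_{1}$, together with the extra technical nuisance that on a time scale the integrals are $\Delta$-integrals and the endpoint $t_{1}$ may be right-scattered, so one should pass to ordinary Lebesgue integrals through Proposition~\ref{P2} before doing the continuity analysis. Everything else --- boundedness, the a priori bound, and the verification that $F$ is well defined into $\mathcal{C}(\mathcal{J},\mathbb{R})$ --- is a direct reprise of the estimates already carried out for Theorem~\ref{th2.3}.
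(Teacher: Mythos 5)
Your proposal is correct and follows essentially the same route as the paper: the same operator $F$, the same boundedness estimate via Proposition~\ref{P2} and $M_{\alpha}$, the same splitting argument for equicontinuity, Arzel\`a--Ascoli to get complete continuity, and the same a priori bound on $\{y : y=\lambda F(y),\ 0<\lambda<1\}$ before invoking the Leray--Schauder/Schauder fixed point alternative. The only cosmetic difference is that the paper spells out the continuity of $F$ as a separate step, which you dismiss as routine.
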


\begin{proof}
We use Schauder's fixed point theorem \cite{A.Granas}
to prove that $\mathrm{F}$ defined by \eqref{eq3:b}
has a fixed point. The proof is given in four steps.\\
\emph{Step 1:} $\mathrm{F}$ is continuous.
Let $y_{n}$ be a sequence such that $y_{n}\rightarrow y$
in $\mathcal{C}(\mathcal{J},\mathbb{R})$.
Then, for each $t\in\mathcal{J}$,
\begin{eqnarray*}
|F(y_{n})(t)
&-&F(y)(t)|\\
&\leq&\frac{z^{\Delta}(t)}{\Gamma(\alpha)}
\int_{t_{0}}^{t}(z(t)-z(s))^{\alpha-1}z^{\Delta}(s)
\left|f(s,y_{n}(s))-f(s,y(s))\right|\Delta s\\
&\leq&\frac{z^{\Delta}(t)}{\Gamma(\alpha)}
\int_{t_{0}}^{t}(z(t)-z(s))^{\alpha-1}z^{\Delta}(s)
\sup_{s\in\mathcal{J}}\left|f(s,y_{n}(s))-f(s,y(s))\right|\Delta s\\
&= & \frac{z^{\Delta}(t)\left \|f(\cdot,y_{n}(\cdot))
-f(\cdot,y(\cdot))\right\|_{\infty}}{\Gamma(\alpha)}
\int_{t_{0}}^{t}(z(t)-z(s))^{\alpha-1}z^{\Delta}(s)\Delta s\\
&\leq& \frac{z^{\Delta}(t)\|f(\cdot,y_{n}(\cdot))
-f(\cdot,y(\cdot))\|_{\infty}}{\Gamma(\alpha)}
\int_{t_{0}}^{t}(z(t)-z(s))^{\alpha-1}z^{\Delta}(s)d s\\
&\leq&\frac{z^{\Delta}(t)M_{\alpha}(t) (t-t_{0})}{
\Gamma(\alpha)}\left\|f(\cdot,y_{n}(\cdot))
-f(\cdot,y(\cdot))\right\|_{\infty}.
\end{eqnarray*}
Since $f$ is a continuous function, we have
\begin{multline*}
\left|F(y_{n})(t)-F(y)(t)\right|\\
\leq\frac{z^{\Delta}(t)M_{\alpha}(t) (t-t_{0})}{
\Gamma(\alpha)}\left\|f(\cdot,y_{n}(\cdot))
-f(\cdot,y(\cdot))\right\|_{\infty}
\longrightarrow 0 \ \text{ as } \ n\rightarrow \infty.
\end{multline*}
\emph{Step 2:} the map $F$ sends bounded sets into bounded sets
in $\mathcal{C}(\mathcal{J},\mathbb{R})$. 
It is enough to show that there exists 
a positive constant $l$ such that
$$
F(y)\in B_{l}=\{F(y)\in\mathcal{C}(\mathcal{J},\mathbb{R})
: \|\mathrm{F}(y)\|_{\infty}\leq l \}.
$$
By hypothesis, for each $t\in\mathcal{J}$, one has
\begin{equation*}
\begin{split}
|\mathrm{F}(y)(t)|&\leq\frac{z^{\Delta}(t)}{\Gamma(\alpha)}
\int_{t_{0}}^{t}(z(t)-z(s))^{\alpha-1}z^{\Delta}(s)|f(s,y(s))|\Delta s\\
&\leq\frac{z^{\Delta}(t)N}{\Gamma(\alpha)}
\int_{t_{0}}^{t} (z(t)-z(s))^{\alpha-1}z^{\Delta}(s)\Delta s\\
&\leq\frac{z^{\Delta}(t)N}{\Gamma(\alpha)}
\int_{t_{0}}^{t}(z(t)-z(s))^{\alpha-1}z^{\Delta}(s)ds\\
&\leq\frac{z^{\Delta}(t)N M_{\alpha}(t) (t-t_{0})}{\Gamma(\alpha)}=l.
\end{split}
\end{equation*}
\emph{Step 3:} $F$ sends bounded sets into equicontinuous sets
of $\mathcal{C}(\mathcal{J},\mathbb{R})$. 
Let  $t_{1}, t_{2} \in \mathcal{J}$, $t_{1} < t_{2}$. Then,
\begin{eqnarray*}
|F(y)(t_{2})&-&F(y)(t_{1})|\\
&\leq&\frac{z^{\Delta}(t)}{\Gamma(\alpha)}\left|\int_{t_{0}}^{t_{1}}
(z(t_{1})-z(s))^{\alpha-1}z^{\Delta}(s)f(s,y(s))\Delta s\right.\\
&&\left. \qquad -\int_{t_{0}}^{t_{2}}(z(t_{2})-z(s))^{\alpha-1}
z^{\Delta}(s)f(s,y(s))\Delta s\right|\\
&\leq&\frac{z^{\Delta}(t)}{\Gamma(\alpha)}\left|
\int_{t_{0}}^{t_{1}}\left((z(t_{1})-z(s))^{\alpha-1}
-(z(t_{2})-z(s))^{\alpha-1}\right.\right.\\
&& \qquad \left.\left. +(z(t_{2})-z(s))^{\alpha-1}\right)z^{\Delta}(s)f(s,y(s))\Delta s\right.\\
&&\qquad \left.-\int_{t_{0}}^{t_{2}}(z(t_{2})-z(s))^{\alpha-1}z^{\Delta}(s)
f(s,y(s))\Delta s\right|\\
&\leq&\frac{z^{\Delta}(t)N}{\Gamma(\alpha)}
\left|\int_{t_{0}}^{t_{1}}((z(t_{1})-z(s))^{\alpha-1}
-(z(t_{2})-z(s))^{\alpha-1})z^{\Delta}(s)\Delta s\right.\\
&&\qquad \left. +\int_{t_{1}}^{t_{2}}(z(t_{2})-z(s))^{\alpha-1}z^{\Delta}(s)
\Delta s\right|\\
&\leq&\frac{z^{\Delta}(t)N}{\Gamma(\alpha)}\left|
\int_{t_{0}}^{t_{1}}((z(t_{1})-z(s))^{\alpha-1}
-(z(t_{2})-z(s))^{\alpha-1})z^{\Delta}(s)ds\right.\\
&&\qquad \left. +\int_{t_{1}}^{t_{2}}(z(t_{2})-z(s))^{\alpha-1}
z^{\Delta}(s) d s\right|.
\end{eqnarray*}
As $t_{1}\rightarrow t_{2}$, the right-hand side of the above inequality
tends to zero. As a consequence of Steps~1 to 3, together with the Arzela--Ascoli
theorem, we conclude that $F : \mathcal{C}(\mathcal{J},\mathbb{R})
\rightarrow \mathcal{C}(\mathcal{J},\mathbb{R})$ is completely continuous.\\
\emph{Step 4:} a priori boundedness of solutions. It remains to show that the set
\begin{equation*}
\mathcal{E}=\{y\in \mathcal{C}(\mathcal{J},\mathbb{R})
: y=\lambda F(y),~~ 0<\lambda<1\}
\end{equation*}
is bounded. Let $y\in \mathcal{E}$ be any element.
Then, for each $t\in \mathcal{J}$,
\begin{equation*}
y(t)=\lambda F(y)(t)
=\lambda \frac{z^{\Delta}(t)}{\Gamma(\alpha)}
\int_{t_{0}}^{t}(z(t)-z(s))^{\alpha-1}z^{\Delta}(s)f(s,y(s))\Delta s.
\end{equation*}
It follows that
\begin{eqnarray*}
|y(t)|
&\leq& \bigg|\frac{z^{\Delta}(t)}{\Gamma(\alpha)}
\int_{t_{0}}^{t}(z(t)-z(s))^{\alpha-1}z^{\Delta}(s)f(s,y(s))\Delta s\bigg|\\
&\leq&\frac{z^{\Delta}(t)}{\Gamma(\alpha)}
\int_{t_{0}}^{t}(z(t)-z(s))^{\alpha-1}z^{\Delta}(s)|f(s,y(s))|\Delta s\\
&\leq&\frac{z^{\Delta}(t)N}{\Gamma(\alpha)}
\int_{t_{0}}^{t}(z(t)-z(s))^{\alpha-1}z^{\Delta}(s)\Delta s\\
&\leq& \frac{z^{\Delta}(t)N}{\Gamma(\alpha)}
\int_{t_{0}}^{t}(z(t)-z(s))^{\alpha-1}z^{\Delta}(s)d s\\
&\leq& \frac{z^{\Delta}(t)N M_{\alpha}(t) (t-t_{0})}{\Gamma(\alpha)}.
\end{eqnarray*}
Hence, the set $\mathcal{E}$ is bounded. As a consequence
of Schauder's fixed point theorem, we conclude that $F$
has a fixed point, which is solution of \eqref{eq1}.
\end{proof}


\section{Examples}
\label{sec4}

When function $z$ is the identity $id$, we have
$I_{a;id}^{\alpha}=I_{a}^{\alpha}$, that is,
we get the ordinary left Riemann--Liouville fractional integral.
In this very particular case, existence of solution
to fractional problem \eqref{eq1} on time scales
has been recently investigated in \cite{MyID:328}.
Here we cover the general situation: existence of solution
to the fractional initial value problem \eqref{eq1}, for a general function $z$.
For illustrative purposes, let $z(t)=t^{2}$, $\mathbb{T}=2^{\mathbb{N}}$,
$\alpha=\frac{1}{2}$, and $t_{0}=0$. Then, $\sigma(t)=2t$ and
\begin{equation*}
\begin{split}
(I_{0;t^{2}}^{\alpha}f)(t)
&=  \frac{(t^{2})^{\Delta}}{\Gamma(\alpha)}
\int_{0}^{t}(t^{2}-s^{2})^{\alpha-1}(s^{2})^{\Delta}f(s)\Delta s\\
&= \frac{(t^{2})^{\Delta}}{\sqrt{\pi}}
\int_{0}^{t}(t^{2}-s^{2})^{-\frac{1}{2}}(s^{2})^{\Delta}f(s)\Delta s.
\end{split}
\end{equation*}
In this case,
\begin{equation*}
F(y)(t)=\frac{(t^{2})^{\Delta}}{\sqrt{\pi}}
\int_{0}^{t}(t^{2}-s^{2})^{-\frac{1}{2}}(s^{2})^{\Delta}f(s,y(s))\Delta s
\end{equation*}
and we have
\begin{eqnarray*}
|F(x)(t)-F(y)(t)|
&\leq& \frac{(t+\sigma(t))L\|x-y\|_{\infty}}{\sqrt{\pi}}
\int_{0}^{t}(t^{2}-s^{2})^{-\frac{1}{2}}(s+\sigma(s)) \Delta s\\
&=&\frac{(t+2t)L\|x-y\|_{\infty}}{\sqrt{\pi}}
\int_{0}^{t}(t^{2}-s^{2})^{-\frac{1}{2}}(s+2s) \Delta s\\
&\leq& \frac{3tL\|x-y\|_{\infty}}{\sqrt{\pi}}
\int_{0}^{t}(t^{2}-s^{2})^{-\frac{1}{2}}(3s) ds\\
&\leq& \frac{9t^{2}L}{\sqrt{\pi}}\|x-y\|_{\infty}.
\end{eqnarray*}
In this example, 
$$
M_{\alpha}(t)=\frac{\displaystyle \int_{0}^{t}(t^{2}-s^{2})^{-\frac{1}{2}}(3s) ds}{t}
$$ 
and \eqref{eq:M:alpha} is reduced to $M_{\alpha}(t) \equiv 3$.
Choose $b$ such that
$$
b = \frac{9t^{2}L}{\sqrt{\pi}}<1.
$$
Then the conditions of Theorem~\ref{th2.3} are satisfied,
and we conclude that there is a function 
$y\in C(\mathcal{J},\mathbb{R})$
solution of \eqref{eq1}.


\section*{Acknowledgments}

This research was initiated
while Mekhalfi was visiting the Department
of Mathematics of University of Aveiro, Portugal, February 2017.
The hospitality of the host institution and the financial support
of University of Ain Temouchent, Algeria,
are here gratefully acknowledged. Torres was supported
by Portuguese funds through CIDMA and FCT, within project
UID/MAT/04106/2013. The authors are very grateful to two 
anonymous referees, for their suggestions and invaluable comments.


\section*{Author Contribution Statement}

Both authors contributed equally to the paper.



\end{document}